\newcommand{\avint}{\int \hspace{-1.0em}-\,}
\definecolor{darkgreen}{rgb}{0,0.55,0}
\newcommand{\grad}{\nabla}
\newcommand{\laplace}{\Delta}
\renewcommand{\div}{\grad\cdot}
\newcommand{\F}{{\mathcal F}}
\newcommand{\R}{\mathbbm{R}}
\newcommand{\Z}{\mathbbm{Z}}
\DeclareMathOperator{\LP}{LP}
\newcommand{\elp}{E_{\varphi}^{\LP}}
\newcommand{\la}{\langle}
\newcommand{\ra}{\rangle}
\newcommand{\lla}{\langle\langle}
\newcommand{\rra}{\rangle\rangle}
\newcommand{\sign}{\mbox{\rm sign}\, }
\def\Xint#1{\mathchoice
{\XXint\displaystyle\textstyle{#1}}%
{\XXint\textstyle\scriptstyle{#1}}%
{\XXint\scriptstyle\scriptscriptstyle{#1}}%
{\XXint\scriptscriptstyle\scriptscriptstyle{#1}}%
\!\int}
\def\XXint#1#2#3{{\setbox0=\hbox{$#1{#2#3}{\int}$ }
\vcenter{\hbox{$#2#3$ }}\kern-.59\wd0}}
\def\avint{\Xint-}
\newtheorem{prop}{Proposition}
\newtheorem{theorem}{Theorem}
\newtheorem{lemma}{Lemma}
\begin{document}
\phantom{ }
\vspace{4em}

\begin{flushleft}
{\large \bf On the Littlewood--Paley spectrum for passive scalar transport equations}\\[2em]
{\normalsize \bf Christian Seis}\\[0.5em]
\small Institut f\"ur Analysis und Numerik,  Westf\"alische Wilhelms-Universit\"at M\"unster, Germany.\\
E-mail: seis@wwu.de\\[2em]
Date: \today\\[3em]
\end{flushleft}

\noindent
{\bf Abstract:} We derive  time-averaged $L^1$ estimates on  Littlewood--Paley decompositions for linear advection-diffusion equations. For wave numbers close to the dissipative cut-off, these estimates are consistent with Batchelor's predictions on the variance spectrum in passive scalar turbulent mixing.

\vspace{2em}

\section{Introduction}

\subsection{Model and main results}

In this short paper, our aim is to derive bounds on the Littlewood--Paley projections of solutions to linear advection-diffusion equations with rough velocity fields. These equations are of the form
\begin{equation}\label{1}
\partial_t \theta +u\cdot \grad \theta -\kappa \laplace \theta= 0,
\end{equation}
where $\theta = \theta(t,x)\in\R$ is a tracer (or ``passive scalar''), $u=u(t,x)\in\R^d$ is a given divergence-free velocity field,
\begin{equation}\label{2}
\div u=0,
\end{equation}
and $\kappa $ is the positive diffusivity constant. We neglect any boundary effects by supposing that the evolution takes place in a box $[0,L]^d$ with periodic boundary conditions.
We equip the problem with an initial condition, that is,
\[
\theta(0,\cdot) = \theta_0.
\]

For simplicity, we shall assume that the spatial integral of  the square of the fractional velocity gradient $\grad^s u$ is constant in time, or equivalently,
\begin{equation}\label{3}
\la  |\grad^s u(t) |^2\ra^{1/2} =   G_s,
\end{equation}
for some constant $G_s$, where $\la \cdot \ra = L^{-d}\int_{[0,L]^d} \cdot\, dx$ denotes the spatial average, and $s\in[0,1]$. Velocity constraints of this form are natural in industrial processes, where $  G_0^2$ is the kinetic energy and $G_1^2$ the power or viscous dissipation rate. The fractional Sobolev norm on the left-hand side is defined on the Fourier level by
\[
\la |\grad^s u|^2\ra = \sum_{m\in \frac{2\pi}L\Z^d} |m|^{2s} |(\F u)(m)|^2,
\]
where $\F u$ is the Fourier transform of $u$, whose definition will be recalled in \eqref{20} below. 

Clearly, mild regularity assumptions on $u$ (in general much weaker than those in \eqref{3}) and the periodic boundary conditions imply that \eqref{1} preserves the spatial average, i.e., $\frac{d}{dt}\la \theta\ra = 0$. We may thus choose $\theta$ with vanishing  spatial average without losing any generality. Likewise, a Galilean transformation allows the restriction to mean-zero velocity fields, that is, $\la u\ra = 0$.


Before stating our main result, we shall introduce the Littlewood--Paley decomposition of our scalar function $\theta $,   whose time-dependency we neglect for a moment. We start by recalling the Fourier transform.

The Fourier transform $\F\zeta$ of an integrable periodic function $\zeta$ on $[0,L]^d$ is defined by
\begin{equation}\label{20}
(\F\zeta)(m) = \avint_{[0,L]^d} \zeta(x)e^{-im\cdot x}\, dx\quad\mbox{for }m\in \frac{2\pi}L \Z^d.
\end{equation}
In this context, $m$ is usually referred to as wave number.
The Fourier transform $\F\phi$ of a Schwartz function $\phi$ on $\R^d$ is defined by
\[
(\F\phi)(\xi) = \frac1{(2\pi)^{d/2}}\int_{\R^d} \phi(x) e^{-i\xi\cdot x}\, dx\quad\mbox{for }\xi\in \R^d.
\]
Here, $\xi$ is the frequency. 

We now select a family of Schwartz functions $\{\phi_{\ell}\}_{\ell\in\Z}$ defined on $\R^d$ such that their Fourier transforms satisfy
\begin{align}
(\F \phi_0)(\xi) &\not =0\quad\mbox{only if } |\xi|\in \left(2^{-1},2\right),\label{5}\\
(\F\phi_{\ell})(\xi) &  = (\F \phi_0)(2^{-\ell}\xi)\quad\mbox{for all $\xi$ and $\ell$},\label{6}\\
\sum_{\ell\in\Z} (\F\phi_{\ell})(\xi) &=1\quad\mbox{for any }\xi\not=0\label{7}.
\end{align}
The Littlewood--Paley decomposition $\{\theta_{\ell}\}_{\ell\in \Z}$ of   $\theta$   is then defined by
\[
\theta_{\ell}: = \phi_{\ell}\ast \theta,
\]
where the operation ``$\ast$'' is the convolution in space. We refer to $\theta_{\ell}$ as the Littlewood--Paley projection of $\theta$ at frequency $|\xi|\sim 2^{\ell}$.

Our main result provides an $L^1$ estimate for the Littlewood--Paley projections of $\theta$. It involves weighted long-time averages $\lla\cdot\rra_{\varphi} = \limsup_{T\to \infty}\frac1T\int_0^T\la \cdot\ra\, e^{\varphi (t)}dt$ for some positive increasing function $\varphi = \varphi(t)$.

\begin{theorem}\label{T2}
For any positive increasing function $\varphi=\varphi(t)$ it holds that
\begin{equation}\label{21}
\lla |\theta_{\ell}|\rra_{\varphi}\lesssim  \kappa^{-1}\left(2^{-(s+2)\ell}\la |\grad^s u|^2\ra^{1/2} + 2^{-3\ell}\|\frac{d\varphi}{dt}\|_{\infty}\right)\lla |\grad\theta|^2\ra^{1/2}\ra_{\varphi}.
\end{equation}

\end{theorem}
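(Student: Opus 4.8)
The plan is to derive an evolution equation for $\theta_\ell=\phi_\ell\ast\theta$, split off a harmless transport term, estimate the remaining ``forcing'' in $L^1$, extract the diffusive damping from the spectral localization of $\theta_\ell$, and finally multiply by $e^{\varphi}$ and take the long‑time average.

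First, apply $\phi_\ell\ast$ to \eqref{1}. Since $\div u=0$ we have $u\cdot\grad\theta=\div(u\theta)$, and a paraproduct decomposition in which $u$ is frozen at frequencies $\lesssim 2^{\ell}$ gives
\[
\partial_t\theta_\ell+(S_{\ell-1}u)\cdot\grad\theta_\ell-\ka\laplace\theta_\ell=-R_\ell ,
\]
where $S_{\ell-1}u=\sum_{j<\ell-1}\phi_j\ast u$ and $R_\ell$ collects the commutator term $\div\big([\phi_\ell\ast,S_{\ell-1}u]\bar\theta_\ell\big)$ (with $\bar\theta_\ell=\sum_{|k-\ell|\le 2}\phi_k\ast\theta$ a fattened block) together with all contributions in which $u$ carries a dyadic frequency $\gtrsim 2^{\ell}$. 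Because $\div(S_{\ell-1}u)=0$ one has the identity $\div\big([\phi_\ell\ast,S_{\ell-1}u]g\big)=\sum_i[\phi_\ell\ast,(S_{\ell-1}u)_i](\partial_i g)$, so that a first–order Taylor expansion of $[\phi_\ell\ast,\cdot\,]$ (using $\|\,y\,\phi_\ell(y)\|_{L^1}\sim 2^{-\ell}$), Cauchy–Schwarz, the Bernstein–type bound $\|\grad S_{\ell-1}u\|_{L^2}\lesssim 2^{(1-s)\ell}G_s$, and $\|\bar\theta_\ell\|_{L^2}\lesssim 2^{-\ell}\|\grad\theta\|_{L^2}$ yield $\la|\div([\phi_\ell\ast,S_{\ell-1}u]\bar\theta_\ell)|\ra\lesssim 2^{-s\ell}G_s\,\la|\grad\theta|^2\ra^{1/2}$. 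The remaining, ``high–in–$u$'' pieces are handled by the same Cauchy–Schwarz and Bernstein estimates together with $\sum_j\la|\grad^s\phi_j\ast u|^2\ra\lesssim\la|\grad^su|^2\ra$ and $\la|\phi_j\ast u|^2\ra^{1/2}\lesssim 2^{-js}\la|\grad^s\phi_j\ast u|^2\ra^{1/2}$. Altogether $\la|R_\ell|\ra\lesssim 2^{-s\ell}G_s\,\la|\grad\theta|^2\ra^{1/2}$.

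Second, test the $\theta_\ell$–equation against $\sgn(\theta_\ell)$ (suitably regularized). The transport term disappears because $S_{\ell-1}u$ is divergence–free, $\la(S_{\ell-1}u)\cdot\grad|\theta_\ell|\ra=0$, while the diffusion term is coercive on the band–limited $\theta_\ell$: since $\widehat{\theta_\ell(t)}$ is supported in $\{|m|\sim 2^{\ell}\}$ for every $t$, one has the $L^1$ spectral–gap bound $-\la\sgn(\theta_\ell)\laplace\theta_\ell\ra\gtrsim 2^{2\ell}\la|\theta_\ell|\ra$ (equivalently, the heat–semigroup decay $\|e^{r\ka\laplace}\theta_\ell\|_{L^1}\lesssim e^{-cr\ka 2^{2\ell}}\|\theta_\ell\|_{L^1}$ for band–limited functions, applied over short time steps in combination with the $L^1$–contractivity of the transport–diffusion propagator). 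This gives, for a universal $c>0$,
\[
\frac{d}{dt}\la|\theta_\ell|\ra+c\,\ka\,2^{2\ell}\,\la|\theta_\ell|\ra\ \le\ \la|R_\ell|\ra .
\]
Now distinguish two regimes. If $\|\tfrac{d\varphi}{dt}\|_{\infty}\le\tfrac c2\ka 2^{2\ell}$, multiply by $e^{\varphi}$ to get $\frac{d}{dt}(\la|\theta_\ell|\ra e^{\varphi})+\tfrac c2\ka 2^{2\ell}\la|\theta_\ell|\ra e^{\varphi}\le\la|R_\ell|\ra e^{\varphi}$; integrating on $[0,T]$, discarding the nonnegative endpoint term at $t=T$, dividing by $T$ and letting $T\to\infty$ yields $\lla|\theta_\ell|\rra_\varphi\lesssim(\ka 2^{2\ell})^{-1}\lla|R_\ell|\rra_\varphi$, and then, using that $G_s$ is constant in time by \eqref{3}, $\lla|\theta_\ell|\rra_\varphi\lesssim \ka^{-1}2^{-(s+2)\ell}\la|\grad^su|^2\ra^{1/2}\,\lla\la|\grad\theta|^2\ra^{1/2}\rra_\varphi$, which is stronger than \eqref{21}. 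If instead $\|\tfrac{d\varphi}{dt}\|_{\infty}>\tfrac c2\ka 2^{2\ell}$, use only the elementary consequence of spectral localization $\la|\theta_\ell|\ra\le\la|\theta_\ell|^2\ra^{1/2}\lesssim 2^{-\ell}\la|\grad\theta|^2\ra^{1/2}$, hence $\lla|\theta_\ell|\rra_\varphi\lesssim 2^{-\ell}\lla\la|\grad\theta|^2\ra^{1/2}\rra_\varphi$, and absorb the factor $2^{-\ell}$ into $\ka^{-1}2^{-3\ell}\|\tfrac{d\varphi}{dt}\|_{\infty}$ via the case hypothesis $1\lesssim\ka^{-1}2^{-2\ell}\|\tfrac{d\varphi}{dt}\|_{\infty}$. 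Combining the two regimes gives \eqref{21}.

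The main obstacle is the second step: extracting the sharp diffusive damping $\ka 2^{2\ell}$ in $L^1$ in the presence of the low–frequency transport term. The transport is harmless for the $L^1$ norm itself (divergence–freeness) and does not destroy the spectral localization of $\theta_\ell(t)=\phi_\ell\ast\theta(t)$; but converting this into the clean damping estimate — in particular dealing with the non–unit constant in the $L^1$ decay of the heat semigroup on band–limited functions — is the delicate point, and it is precisely what forces the two–regime splitting above. By comparison, the paraproduct bookkeeping in the first step is routine, the only subtlety being that the ``paraproduct'' interaction (low–frequency $u$ against comparable–frequency $\theta$) is genuinely of size $\la|\grad^su|^2\ra^{1/2}\la|\grad\theta|^2\ra^{1/2}$ and becomes admissible only once it is recognized as transport and kept on the left–hand side rather than treated as forcing.
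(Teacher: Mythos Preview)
Your proposal has a genuine gap at exactly the point you flag as ``the delicate point'': the $L^1$ diffusive damping $-\la\sgn(\theta_\ell)\laplace\theta_\ell\ra\gtrsim 2^{2\ell}\la|\theta_\ell|\ra$ for the annulus-localized block $\theta_\ell$. The heat-semigroup bound $\|e^{t\ka\laplace}f\|_{L^1}\le C e^{-ct\ka 2^{2\ell}}\|f\|_{L^1}$ for band-limited $f$ holds only with a prefactor $C>1$, so it does \emph{not} differentiate at $t=0$ to the infinitesimal inequality you need. Your Trotter sketch does not repair this: after a transport step by $S_{\ell-1}u$ the function is no longer spectrally localized in the annulus, so the band-limited heat decay is unavailable at the next step; and if instead you use the pure heat Duhamel formula, the transport term $(S_{\ell-1}u)\cdot\grad\theta_\ell$ reappears as a large forcing that you cannot absorb. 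The paper resolves this by a \emph{finer} Fourier localization: it covers the annulus $\{|\xi|\in(2^{\ell-1},2^{\ell+1})\}$ by balls $B_{\sigma 2^{\ell}}(\xi_j)$ of small relative radius $\sigma$, sets $\theta_{\ell,j}=\psi_{\ell,j}\ast\theta_\ell$, and proves $\la|\laplace\theta_{\ell,j}+|\xi_j|^2\theta_{\ell,j}|\ra\lesssim \sigma\,2^{2\ell}\la|\theta_{\ell,j}|\ra$ by an explicit convolution computation; for $\sigma$ small this yields $-\la\sgn(\theta_{\ell,j})\laplace\theta_{\ell,j}\ra\gtrsim 2^{2\ell}\la|\theta_{\ell,j}|\ra$. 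The whole argument is then run for $\theta_{\ell,j}$, and one recovers $\theta_\ell$ at the very end by summing over the $O(1)$ many $j$.

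Two further remarks. First, your paraproduct decomposition is an unnecessary complication: since $\div u=0$, the \emph{full} transport term $u\cdot\grad\theta_{\ell,j}$ already vanishes when tested against $\sgn(\theta_{\ell,j})$, so the paper simply writes $\partial_t\theta_{\ell,j}+u\cdot\grad\theta_{\ell,j}-\ka\laplace\theta_{\ell,j}=[u\cdot,\phi_{\ell,j}\ast]\grad\theta$ and estimates the commutator with the full $u$ by the elementary bound $\la|[v,\phi\ast]q|\ra\lesssim(\int|\phi|)^{1-s}(\int|\phi||y|)^s\la|\grad^s v|^2\ra^{1/2}\la q^2\ra^{1/2}$ (proved by interpolating between $s=0$ and $s=1$), together with $\int|\phi_{\ell,j}||y|^r\,dy\lesssim 2^{-r\ell}$. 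Second, your two-regime split at the end is correct but also not what the paper does: after multiplying by $e^{\varphi}$ and time-averaging, the paper keeps the term $2^{-2\ell}\|\tfrac{d\varphi}{dt}\|_\infty\lla|\theta_{\ell,j}|\rra_\varphi$ on the right and bounds that occurrence directly via the Bernstein-type estimate $\la|\theta_{\ell,j}|\ra\lesssim 2^{-\ell}\la|\grad\theta|^2\ra^{1/2}$, with no case distinction.
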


The estimate gives a bound on the $L^1$ norm of the Littlewood--Paley projections of the tracer variable  in terms of the velocity gradient and the average dissipation rate. It is  obvious that this estimate is optimal in the case of no stirring, $u=0$. Whether this estimate is mathematically sharp for certain no-trivial mixing flows is not clear to the author. 
In the following subsection, however, we will comment on the (weak) significance of this estimate for the mathematical theory of passive tracer turbulent mixing.

We have chosen the $L^1$ norm of the Littlewood--Paley projections over other Lebesgue norms in order to be able to estimate the nonlinearity (or, more precisely, the commutator of advection and Littlewood--Paley projection) against the $L^2$ norms of the velocity and the dissipation, which  have both physical meaning. Moreover, the inclusion of time weights is necessary in order to compensate the dissipation to zero in the long-time average. This can be already  seen on the level of the purely diffusive equation, where  $(\F \theta)(t,k)  = e^{-\kappa |k|^2 t} (\F\theta_0)(k)$ for every wavenumber $k$. Here $\varphi(t)  = \kappa |k_0|^2t$ for the smallest relevant wavenumber $k_0$ would be an appropriate choice. 

We remark that our  analysis of Littlewood--Paley projections is modelled after existing similar estimates in the context of the two- and three-dimensional Navier--Stokes equations by Constantin \cite{Constantin97} and Otto and Ramos \cite{OttoRamos10} and for the temperature distribution in  Rayleigh--B\'enard convection by the author \cite{Seis13a}.

 \subsection{Physical interpretation}

The linear advection-diffusion equation \eqref{1} describes the evolution of a scalar quantity $\theta$ that is simultaneously transported by the flow of the  vector field $u$ and  diffused at rate $\kappa$. We interpret the vector field $u$ as the velocity of an incompressible fluid, cf.~\eqref{2}, and $\theta$ is a tracer marker or a physical quantity.

If the flow is sufficiently turbulent, mixing of trace markers and physical quantities is a ubiquitous phenomenon. It can be observed in various areas of fluid dynamics, for instance, the mixing of saltwater and fresh water in estuaries or  the dispersion of pollutants in the earth's atmosphere. Besides their relevance in nature, mixing flows are of fundamental importance in numerous applications in industrial process engineering. Their theoretical study has been a major focus of research for many years; it has been  frequently reviewed, see, e.g., \cite{Ottino90,ShraimanSiggia00,Thiffeault12}. Flow mediated mixing is frequently referred to as stirring.

In the past years, fluid mixing attracted a remarkable attention by the  mathematical fluid dynamics communities and beyond. The majority  of the rigorous works, however, addressed the purely advective model, for instance, with a focus on absolute lower bounds on mixing rates  \cite{CrippaDeLellis08,LinThiffeaultDoering11,Lunasin12,Seis13b,IyerKiselevXu14}, optimal mixing strategies \cite{LinThiffeaultDoering11,Lunasin12,AlbertiCrippaMazzucato14,AlbertiCrippaMazzucato16,YaoZlatos17}, or universal mixers 
\cite{ElgindiZlatos18}. In the diffusive setting, it was shown that mixing flows enhance diffusive relaxation  \cite{ConstantinKiselevRyzhikZlatos08,BedrossianCotiZelati17,CotiZelatiDegadinoElgindi18}, while diffusion itself slows down the mixing rates \cite{MilesDoering17}.

In 1959, Batchelor analyzed the variance transfer from lower to higher frequencies that accompanies  the creation of gradients of $\theta$  by the turbulent fluid motion \cite{Batchelor59}. He predicts that for wave numbers in the so-called advective subrange $k\ll k_B$, the variance spectrum scales as
 \begin{equation}\label{8}
 E(k) \sim \chi \tau k^{-1}.
 \end{equation}
The advective subrange is the part of the equilibrium range for which the tracer's Fourier components  are (thought to be) independent of molecular diffusion. The Batchelor wave number $k_B$ is  inversely proportional to the Batchelor  dissipation scale at which stirring and diffusion balance, and it determines the large time decay rate of the tracer variance. To be more specific,  if, in a typical mixing scenario, the smallest length scales are reduced to the order of the Batchelor scale, the subsequent variance decay is essentially governed by the slowest diffusion rate, namely $e^{-2 \kappa k_B^2 t}$. This decay rate has been obtained for shear flows in \cite{BedrossianCotiZelati17} (modulo logarithmic corrections).


We will now show that our  main result, Theorem \ref{T2}, is consistent with the decay of the variance spectrum \eqref{8} for wave numbers that are \emph{of the order of the Batchelor wave number $k_B$}, if the normalizing factor $e^\varphi$ is chosen in such a way that it balances the variance   decay rate $e^{-2 \kappa k_B^2 t}$. This  rigorous result  thus  weakly connects a hypothesis on the large time mixing rate with the scaling of the variance spectrum. The interpretation applies to the case $s=1$ only.

Let us define the time-averaged Littlewood--Paley variance spectrum at frequency $k$ as
\begin{equation}\label{22}
\elp(k): = \frac{\lla |\theta_{\ell}|\rra_{\varphi}^2}{k}\quad\mbox{if }k\in[2^{\ell-1},2^{\ell}).
\end{equation}
Notice that this spectrum is sort of a (time-averaged) $L^1$ version of the traditional variance spectrum $E(k)$, which can be defined as  
\[
E(k) = \int_{|m|=k} |(\F \theta)(m)|^2\, dS(m).
\]
The variance decay rate is given by
$\chi  = - \frac{d}{dt}\la \theta^2\ra = \kappa \la |\grad\theta|^2\ra$.
As we expect  for large times that the variance  decays exponentially fast with rate $\kappa k_B^2$, we shall time-average $\chi$ and consider $\chi_{\varphi}=\la \chi^{1/2}\ra_{\varphi}^2$ with $d\varphi/dt\lesssim \kappa k_B^2 $.
Finally, in order to define the stirring time scale, we set $\tau = G_1^{-1} = \la|\grad u|^2\ra^{-1/2}$.
 Taking into account the constraint \eqref{3} on the velocity field, the Batchelor wave number $k_B$ is given by $k_B  = ({G_1}/{\kappa})^{1/2}$. Notice that for $s=1$, the stirring and diffusion time scales are of the same order. 

With these notations,  estimate \eqref{21} can be rewritten as
\[
\elp(k) \lesssim \left[\left(\frac{k_B}k\right)^{4+2s} + \left(\frac{k_B}k\right)^6 \right]\chi_{\varphi} \tau k^{-1},
\]
where $\varphi(t) \approx \kappa k_B^2 t$. Arguing as in \cite{Constantin97,OttoRamos10}, this implies that
\[
\elp(k) \lesssim  \chi_{\varphi} \tau k^{-1},
\]
for every $k\in [\beta k_B,\beta^{-1} k_B]$ with $\beta<1$ and a non-displayed constant dependent on $\beta$.
Therefore, one side of \eqref{8} holds in the last decades before the dissipative cut-off, if the variance spectrum is defined as in \eqref{22}. For the most interesting range of wave numbers less than $k_B$, no statement can be derived.

 The scaling of the Batchelor spectrum \eqref{8} is the passive scalar mixing analogue of Kolmogoroff's $k^{-5/3}$ law for the decay of the energy spectrum in the inertial subrange in turbulent flows \cite{Kolmogoroff41,Obukhoff41,Frisch95}. In fact, in mixing, the creation of filaments by the stirring velocity field can be interpreted as the transfer of tracer variance from small to large wave numbers, analogous to the energy transfer in turbulent flows in  the celebrated K41 theory. It is, however, by now commonly believed that the $-5/3$ power law is not exact. Responsible for  deviations are intermittency effects which seem to alter the numerical value of this exponent \cite{Frisch95,SreenivasanAntonia97,KanedaEtal03,DonzisSreenivasan10}.
 Nonetheless, there are attempts to approach the scaling of the energy spectrum rigorously, see, e.g.\ \cite{OttoRamos10}.

In contrast, Batchelor's $-1$ power law seems to be rather sturdy; even strong intermittency effects leave the law unchanged \cite{Kraichnan68}. Yet, the literature on this topic reports quite controversial experimental and computational results, see, for instance, \cite{DonzisSreenivasanYeong10} and the discussion therein.

%

%
%

We turn now to the proof of Theorem \ref{T2}.

\section{Proofs}


It will be necessary to localize $\theta$ on an even finer level (than $\theta_{\ell}$) in Fourier space. For this purpose, we cover the annulus $\{\xi\in \R^d:\: |\xi|\in (2^{\ell-1},2^{\ell+1})\}$ by a finite family of balls $\{B_{\sigma 2^{\ell}}(\xi_j)\}_{j=1,\dots,J }$, where $\sigma$ is a small positive number that will be fixed later,  and denote by $\{\psi_{\ell,j}\}_{j=1,\dots,J}$ a family of Schwartz functions whose Fourier transforms form a partition of unity subordinate to this covering. Notice that we can construct the $\psi_{\ell,j}$'s by scaling analogously to \eqref{6}, namely
\begin{equation}\label{16}
(\F\psi_{\ell,j}(\xi) = (\F\psi_{0,j})(2^{-\ell}\xi)\quad\mbox{for all $\xi$, $\ell$, and $j$.}
\end{equation}
 We then introduce a refinement of $\phi_{\ell}$ by setting $\phi_{\ell,j} = \phi_{\ell}\ast \psi_{\ell,j}$ and define
\[
\theta_{\ell,j} = \theta \ast \phi_{\ell,j} = \theta_{\ell}\ast\psi_{\ell,j}. 
\]

Our first result is a scale-by-scale energy estimate.

\begin{lemma}\label{L1}
There exists a universal constant $C>0$ such that
\begin{equation}\label{11}
\frac{d}{dt} \la |\theta_{\ell,j}|\ra +\frac{2^{2\ell} \kappa}{C} \la |\theta_{\ell,j}|\ra \le  \la |[u\cdot,\phi_{\ell,j}\ast]\grad\theta|\ra,
\end{equation}
where $[u\cdot,\phi_{\ell,j}\ast]$ is the commutator of the operations ``multiply by $u$'' and ``convolute with $\phi_{\ell,j}$''. 
\end{lemma}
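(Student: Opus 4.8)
The plan is to apply the refined projection $\phi_{\ell,j}\ast(\cdot)$ to \eqref{1}, run an $L^1$ energy estimate for the resulting equation, and recover the dissipative gain $2^{2\ell}\kappa/C$ from a quantitative $L^1$ dissipation estimate for the heat operator acting on functions whose Fourier transform is supported in the small ball $B_{\sigma 2^\ell}(\xi_j)$.

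Concretely, convolving \eqref{1} with $\phi_{\ell,j}$ and using that convolution commutes with $\partial_t$ and $\Delta$, then adding and subtracting $u\cdot\grad\theta_{\ell,j}=u\cdot(\phi_{\ell,j}\ast\grad\theta)$, yields
\[
\partial_t\theta_{\ell,j}+u\cdot\grad\theta_{\ell,j}-\kappa\Delta\theta_{\ell,j}=[u\cdot,\phi_{\ell,j}\ast]\grad\theta .
\]
Testing this with $\sgn(\theta_{\ell,j})$ — rigorously, with $\beta_\delta'(\theta_{\ell,j})$ for a smooth convex approximation $\beta_\delta$ of $|\cdot|$ and then letting $\delta\to0$, which is legitimate since $\theta_{\ell,j}$ is smooth for $\kappa>0$ — and integrating over the torus, the transport term drops out by \eqref{2}, because $\la\sgn(\theta_{\ell,j})\,u\cdot\grad\theta_{\ell,j}\ra=\la u\cdot\grad|\theta_{\ell,j}|\ra=-\la(\div u)|\theta_{\ell,j}|\ra=0$, while the right-hand side is bounded by $\la|[u\cdot,\phi_{\ell,j}\ast]\grad\theta|\ra$. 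Thus \eqref{11} reduces to the \emph{dissipation estimate} $-\la\sgn(\theta_{\ell,j})\Delta\theta_{\ell,j}\ra\ge\tfrac{2^{2\ell}}{C}\la|\theta_{\ell,j}|\ra$.

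To prove this I would pick a fattened bump $\tilde\Phi_{\ell,j}$, built by the dilation rule \eqref{16}, with $\F\tilde\Phi_{\ell,j}\equiv1$ on $B_{\sigma 2^\ell}(\xi_j)\supseteq\spt\F\theta_{\ell,j}$ and $\spt\F\tilde\Phi_{\ell,j}\subseteq B_{2\sigma 2^\ell}(\xi_j)$, so that $\theta_{\ell,j}\ast\tilde\Phi_{\ell,j}=\theta_{\ell,j}$. With $a_\ell:=\inf\{|\xi|^2:\xi\in\spt\F\theta_{\ell,j}\}$, which satisfies $a_\ell\ge2^{2\ell-2}$ by \eqref{5}, one splits $-\Delta\theta_{\ell,j}=\theta_{\ell,j}\ast(-\Delta\tilde\Phi_{\ell,j})=a_\ell\theta_{\ell,j}+\theta_{\ell,j}\ast r_{\ell,j}$ with $\F r_{\ell,j}(\xi)=(|\xi|^2-a_\ell)\F\tilde\Phi_{\ell,j}(\xi)$. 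Pairing with $\sgn(\theta_{\ell,j})$ and using Young's inequality gives $-\la\sgn(\theta_{\ell,j})\Delta\theta_{\ell,j}\ra\ge(a_\ell-\|r_{\ell,j}\|_{L^1})\la|\theta_{\ell,j}|\ra$, so it remains to check $\|r_{\ell,j}\|_{L^1}\le a_\ell/2$ once $\sigma$ is fixed small. Since $|\xi|^2$ varies by only $O(\sigma2^{2\ell})$ on $B_{2\sigma2^\ell}(\xi_j)$ while $\F\tilde\Phi_{\ell,j}$ is a bump at scale $\sigma2^\ell$, rescaling to unit frequency shows $\F r_{\ell,j}$ equals $2^{2\ell}$ times a function supported in a ball of radius $O(\sigma)$ whose $C^{d+1}$ norm is $O(\sigma)$; the elementary bound $\|\F^{-1}h\|_{L^1}\lesssim_d\sum_{|\alpha|\le d+1}\|\partial^\alpha h\|_{L^\infty}$ for $h$ supported in a unit ball then gives $\|r_{\ell,j}\|_{L^1}\lesssim_d\sigma2^{2\ell}$. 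Choosing $\sigma$ small, depending only on $d$ — which simultaneously fixes the number $J$ of balls — yields $\|r_{\ell,j}\|_{L^1}\le a_\ell/2$ and hence the dissipation estimate with $C$ depending only on $d$.

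The main obstacle is exactly this last point: $L^1$ Fourier-multiplier estimates are not for free, so one cannot simply invoke a Bernstein inequality for $-\Delta$ on the annulus $|\xi|\sim2^\ell$; the refinement to the small balls $B_{\sigma2^\ell}(\xi_j)$ is introduced precisely so that, on each piece, $-\Delta$ becomes an $L^1$-small perturbation of multiplication by the constant $a_\ell\sim2^{2\ell}$, which makes the absorption go through. Everything else — the commutator identity, the cancellation of transport, and the smooth approximation of $|\cdot|$ — is routine.
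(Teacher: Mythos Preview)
Your overall strategy coincides with the paper's: convolve \eqref{1} with $\phi_{\ell,j}$, test with (a regularisation of) $\sgn(\theta_{\ell,j})$, use \eqref{2} to kill the transport term, and reduce everything to the dissipation inequality $-\la\sgn(\theta_{\ell,j})\Delta\theta_{\ell,j}\ra\gtrsim 2^{2\ell}\la|\theta_{\ell,j}|\ra$. For the latter you also follow the paper's idea of writing $-\Delta\theta_{\ell,j}$ as a constant multiple of $\theta_{\ell,j}$ plus a convolution remainder and applying Young's inequality; the paper simply uses $|\xi_j|^2$ as the constant instead of your $a_\ell$, which is immaterial.

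The gap is in your remainder bound $\|r_{\ell,j}\|_{L^1}\lesssim\sigma\,2^{2\ell}$. After rescaling by $2^{\ell}$ the function $h(\eta)=(|\eta|^2-2^{-2\ell}a_\ell)\,(\F\tilde\Phi_{0,j})(\eta)$ does satisfy $\|h\|_{L^\infty}=O(\sigma)$, but its $C^{d+1}$ norm is \emph{not} $O(\sigma)$: already
\[
\partial_{\eta_k}h(\eta)=2\eta_k\,(\F\tilde\Phi_{0,j})(\eta)+(|\eta|^2-2^{-2\ell}a_\ell)\,\partial_{\eta_k}(\F\tilde\Phi_{0,j})(\eta)
\]
is of size $O(1)$ on the support (the first term because $|\eta|\sim1$ there, the second because a bump that goes from $1$ to $0$ across a shell of width $\sigma$ has derivatives of size $\sigma^{-1}$, cancelling the $O(\sigma)$ prefactor). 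Higher derivatives grow like $\sigma^{1-|\alpha|}$, so your $L^\infty$-based multiplier bound only yields $\|r_{\ell,j}\|_{L^1}\lesssim_d \sigma^{-d}\,2^{2\ell}$, which cannot be absorbed into $a_\ell\sim 2^{2\ell}$.

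The paper sidesteps Fourier-side derivative counting entirely. It writes the reproducing kernel in the explicit modulated form $\zeta_{\ell,j}(x)=(2^\ell\sigma)^d\zeta(2^\ell\sigma x)\,e^{i\xi_j\cdot x}$ and computes $\Delta\zeta_{\ell,j}+|\xi_j|^2\zeta_{\ell,j}$ directly in physical space. The point is that $\Delta+|\xi_j|^2$ annihilates the carrier $e^{i\xi_j\cdot x}$, so only the terms in which at least one derivative lands on the slowly varying envelope $\zeta(2^\ell\sigma\,\cdot)$ survive; each such derivative contributes a factor $2^\ell\sigma$, and after the change of variables $y=2^\ell\sigma x$ one gets
\[
\int_{\R^d}\bigl|\Delta\zeta_{\ell,j}+|\xi_j|^2\zeta_{\ell,j}\bigr|\,dx\ \lesssim\ 2^{2\ell}(\sigma^2+\sigma)\ \lesssim\ 2^{2\ell}\sigma .
\]
This is precisely the estimate you need; replacing your multiplier argument by this two-line computation (with $|\xi_j|^2$ in place of $a_\ell$) closes the proof.
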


\begin{proof}
We start by localizing the advection-diffusion equation \eqref{1} in Fourier space in the balls $B_{\sigma 2^{\ell}}(\xi_j)$,
\[
\partial_t \theta_{\ell,j} + u\cdot \grad\theta_{\ell,j} -\kappa\laplace \theta_{\ell,j} = [u\cdot,\phi_{\ell,j}\ast]\grad \theta.
\]
Here, we have used the fact that temporal and spatial derivatives commute with the operation $\phi_{\ell,j}\ast$.  Let $A(s)$ denote a smooth approximation of the modulus function $s\mapsto |s|$. An application of the chain rule then yields
\[
\partial_t A(\theta_{\ell,j}) + u\cdot \grad A(\theta_{\ell,j}) -\kappa A'(\theta_{\ell,j})\laplace \theta_{\ell,j} = A'(\theta_{\ell,j})[u\cdot ,\phi_{\ell,j}\ast]\grad\theta.
\]
Thanks to the periodic boundary conditions and the fluid's incompressibility encoded in \eqref{2}, the advection term on the left-hand side drops out when averaged over the cell $[0,L]^d$,
\[
\partial_t\la A(\theta_{\ell,j})\ra -\kappa \la A'(\theta_{\ell,j})\laplace\theta_{\ell,j}\ra = \la A'(\theta_{\ell,j})[u\cdot ,\phi_{\ell,j}\ast]\grad\theta\ra.
\]
(Notice that the original advection term still survives in the commutator term.) We will now carry out the approximation by choosing $A(s)=|s|$, which can be realized on a distributional level. We then obtain the estimate
\[
\partial_t\la |\theta_{\ell,j}|\ra -\kappa \la \sign(\theta_{\ell,j})\laplace\theta_{\ell,j}\ra \le \la |[u\cdot ,\phi_{\ell,j}\ast]\grad\theta|\ra.
\]

For the statement of the lemma, it remains to prove that
\begin{equation}\label{10}
-\la \sign(\theta_{\ell,j})\laplace\theta_{\ell,j}\ra \gtrsim 2^{2\ell} \la|\theta_{\ell,j}|\ra.
\end{equation}
For this purpose, we select a Schwartz function $\zeta $ whose Fourier transform is constantly $1$ on the unit ball, $(\F\zeta)(\xi)=1$ for $|\xi|\le 1$. We then define
\[
\zeta_{\ell,j}(x) = ( 2^{\ell}\sigma)^d \zeta( 2^{\ell} \sigma x) e^{i \xi_j\cdot x}
\]
and observe that $(\F\zeta_{\ell,j})(\xi) = (\F\zeta)\left(\frac{\xi-\xi_j}{2^{\ell}\sigma}\right)=1$ for $\xi\in B_{2^{\ell}\sigma}(\xi_j) $. As a consequence, $\zeta_{\ell,j}$ leaves $\theta_{\ell,j}$ invariant under convolution, $\theta_{\ell,j} = \theta_{\ell,j}\ast \zeta_{\ell,j}$. It follows that
\[
\laplace \theta_{\ell,j} + |\xi_j|^2\theta_{\ell,j} = \left(\laplace \zeta_{\ell,j}+|\xi_j|^2\zeta_{\ell,j}\right)\ast\theta_{\ell,j},
\]
and application of Young's convolution estimate then yields
\begin{equation}\label{13}
\la | \laplace \theta_{\ell,j} + |\xi_j|^2\theta_{\ell,j} |\ra \le \la |\theta_{\ell,j}|\ra\int_{\R^d} |\laplace \zeta_{\ell,j}+|\xi_j|^2\zeta_{\ell,j}|\, dx .
\end{equation}
We claim that
\begin{equation}\label{12}
\int_{\R^d} |\laplace \zeta_{\ell,j}+|\xi_j|^2\zeta_{\ell,j}|\, dx\lesssim 2^{2\ell}\sigma .
\end{equation}
Indeed, by a direct computation we find that
\[
\left(\laplace \zeta_{\ell,j} + |\xi_j|^2\zeta_{\ell,j} \right)(x) = \left((2^{\ell}\sigma)^{d+2} (\laplace \zeta)(2^{\ell}\sigma x) + 2(2^{\ell}\sigma)^{d+1} i\xi_j\cdot (\grad\zeta)(2^{\ell}\sigma x)\right)e^{i\xi_j\cdot x},
\]
and thus, integration and the change of variables $y = 2^{\ell}\sigma x$ yield
\[
\int_{\R^d} |\laplace \zeta_{\ell,j} + |\xi_j|^2\zeta_{\ell,j} |\, dx \lesssim 2^{2\ell} (\sigma^2+\sigma ) \int_{\R^d} |\laplace\zeta| + |\grad\zeta|\, dy.
\]
Because $\zeta$ is a Schwartz function and $\sigma $ small (say, smaller than $1$), we deduce \eqref{12}. It remains to plug \eqref{12} into \eqref{13} and conclude that
\begin{align*}
-\la \sign(\theta_{\ell,j})\laplace \theta_{\ell,j}\ra & = \la \sign(\theta_{\ell,j})|\xi_j|^2\theta_{\ell,j}\ra - \la \sign(\theta_{\ell,j})\left(\laplace \theta_{\ell,j} + |\xi_j|^2\theta_{\ell,j}\right)\ra\\
&\ge \left(|\xi_j|^2 - \frac{2^{2\ell}\sigma}C\right)\la |\theta_{\ell,j}|\ra,
\end{align*}
for some universal constant $C>0$. Using  $|\xi_j|\ge 2^{\ell-1}$ and choosing $\sigma$ sufficiently small implies \eqref{10}  as desired.
\end{proof}

The left-hand side in the energy estimate \eqref{11} is further bounded with the help of the following auxiliary convolution estimate.

\begin{lemma}\label{L2}
Suppose that $v = v(x) $ and $q=q(x) $ are $[0,L]^d$ periodic  functions with $\la |\grad^s v|^2\ra ,\la q^2\ra <\infty$ and $\phi = \phi(y)$ is a Schwartz function on $\R^d$, then
\[
\la |[v,\phi\ast]q|\ra \lesssim \left(\int_{\R^d}|\phi(y)| \, dy\right)^{1-s}\left(\int_{\R^d}|\phi(y)||y| \, dy\right)^s \la |\grad^s v|^2\ra^{1/2}\la q^2\ra^{1/2}.
\]
\end{lemma}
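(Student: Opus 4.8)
The plan is to express the commutator pointwise as a single integral in which the velocity enters only through its increments. Writing out the two convolutions,
\[
([v,\phi\ast]q)(x) = v(x)\int_{\R^d}\phi(y)q(x-y)\,dy - \int_{\R^d}\phi(y)v(x-y)q(x-y)\,dy = \int_{\R^d}\phi(y)\bigl(v(x)-v(x-y)\bigr)q(x-y)\,dy,
\]
so that $|[v,\phi\ast]q|(x)\le \int_{\R^d}|\phi(y)|\,|v(x)-v(x-y)|\,|q(x-y)|\,dy$. Taking the spatial average, interchanging it with the $y$-integral (Tonelli, everything nonnegative), applying the Cauchy--Schwarz inequality in $x$ to the product $|v(x)-v(x-y)|\cdot|q(x-y)|$, and using that the average over the torus is translation invariant (so $\la |q(\cdot-y)|^2\ra = \la q^2\ra$), I obtain
\[
\la |[v,\phi\ast]q|\ra \le \la q^2\ra^{1/2}\int_{\R^d}|\phi(y)|\,\la |v(\cdot)-v(\cdot-y)|^2\ra^{1/2}\,dy.
\]

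The core of the argument is then the bound on the $L^2$ modulus of continuity of $v$, namely $\la |v(\cdot)-v(\cdot-y)|^2\ra^{1/2}\lesssim |y|^s\la|\grad^s v|^2\ra^{1/2}$, which I would prove on the Fourier side. By Plancherel,
\[
\la |v(\cdot)-v(\cdot-y)|^2\ra = \sum_{m\in\frac{2\pi}{L}\Z^d} \bigl|1-e^{-im\cdot y}\bigr|^2\,|(\F v)(m)|^2,
\]
and the elementary estimate $|1-e^{-im\cdot y}| = 2\bigl|\sin(\tfrac12 m\cdot y)\bigr| \le \min(2,|m||y|) \le 2^{1-s}(|m||y|)^s$ — the last step because $\min(2,a)\le 2^{1-s}a^s$ for every $a\ge 0$ and $s\in[0,1]$ — gives $\la |v(\cdot)-v(\cdot-y)|^2\ra \le 2^{2(1-s)}|y|^{2s}\sum_m |m|^{2s}|(\F v)(m)|^2 = 2^{2(1-s)}|y|^{2s}\la|\grad^s v|^2\ra$.

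Inserting this into the previous display yields $\la |[v,\phi\ast]q|\ra \lesssim \la q^2\ra^{1/2}\la|\grad^s v|^2\ra^{1/2}\int_{\R^d}|\phi(y)|\,|y|^s\,dy$, and it remains only to split the last integral into the product form of the statement. This is H\"older's inequality in $y$ with conjugate exponents $\tfrac1{1-s}$ and $\tfrac1s$, applied to the factors $|\phi(y)|^{1-s}$ and $\bigl(|\phi(y)||y|\bigr)^s$:
\[
\int_{\R^d}|\phi(y)|\,|y|^s\,dy = \int_{\R^d}|\phi(y)|^{1-s}\bigl(|\phi(y)||y|\bigr)^s\,dy \le \Bigl(\int_{\R^d}|\phi(y)|\,dy\Bigr)^{1-s}\Bigl(\int_{\R^d}|\phi(y)||y|\,dy\Bigr)^s.
\]
The endpoint cases $s=0$ and $s=1$ need no interpolation: $s=1$ is immediate, and $s=0$ follows directly from Young's convolution inequality applied to both pieces of the commutator. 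I do not expect a serious obstacle; the estimate is a quantitative commutator/interpolation computation, and the only points requiring a little care are the consistency of the whole-space convolution with the periodically extended functions $v,q$ (and the fact that $\phi\ast q$ is again $[0,L]^d$-periodic), and the combined use of $\min(2,a)\le 2^{1-s}a^s$ with $|m\cdot y|\le |m||y|$, which is exactly what produces the clean dependence on the two moments $\int|\phi|$ and $\int|\phi||y|$ rather than on $\int|\phi||y|^s$.
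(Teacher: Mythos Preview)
Your argument is correct. The route differs from the paper's, though. The paper first establishes the endpoint cases $s=0$ and $s=1$ directly (for $s=1$ via the fundamental theorem of calculus, writing $v(x)-v(x-y)=\int_0^1 y\cdot\grad v(x-ry)\,dr$) and then reaches intermediate $s$ by a real-interpolation step: it splits $v=v_0^M+v_1^M$ into high- and low-frequency parts, applies the $s=0$ bound to $v_0^M$ and the $s=1$ bound to $v_1^M$, and optimizes over the cutoff $M$. You instead bound the $L^2$ modulus of continuity $\la|v-v(\cdot-y)|^2\ra^{1/2}$ by $|y|^s\la|\grad^s v|^2\ra^{1/2}$ in one stroke on the Fourier side through $\min(2,a)\le 2^{1-s}a^s$, and then recover the product of the two $\phi$-moments by H\"older in $y$. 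Your approach is shorter and more self-contained in this $L^2$/Plancherel setting; the paper's splitting argument is the standard $K$-functional pattern and would transfer more readily to situations without a clean Fourier description.
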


\begin{proof}Notice first that it is enough to consider the pivotal cases $s=0$ and $s=1$. The general case can be obtained via interpolation. Indeed, for $v\in H^s$ and an arbitrary $M>0$, we consider the decomposition $v=v_0^M+v_1^M$ with 
\[
(\F v_0^M)(m) = \begin{cases} (\F v)(m) &\mbox{if }|m|>M,\\ 0&\mbox{otherwise}.\end{cases}
\]
Then $v_0^M\in L^2$ and $v_1^M\in H^1$. If the statement is proved for $s=0$ and $s=1$, then 
\begin{align*}
\la |[v,\phi\ast]q|\ra & \le \la |[v_0^M,\phi\ast]q|\ra + \la |[v_1^M,\phi\ast]q|\ra\\ 
 &\le \left(\int_{\R^d}|\phi(y)|\, dy \la |  v_0^M|^2\ra^{1/2}+\int_{\R^d}|\phi(y)||y| \, dy \la |  \grad v_1^M|^2\ra^{1/2}\right)\la q^2\ra^{1/2}.
\end{align*}
From the definition of $v_0^M$ and $v_1^M$ it immediately follows that $ \la |  v_0^M|^2\ra^{1/2} \le M^{-s}\la |\grad^{s} v|^2\ra^{1/2}$ and $ \la | \grad v_1^M|^2\ra^{1/2} \le M^{1-s}\la |\grad^{s} v|^2\ra^{1/2}$ , and thus 
\[
\la |[v,\phi\ast]q|\ra \le  \left(M^{-s} \int_{\R^d}|\phi(y)|  \, dy+M^{1-s} \int_{\R^d}|\phi(y)| |y|\, dy  \right) \la | \grad^s v|^2\ra^{1/2} \la q^2\ra^{1/2}.
\]
Minimizing in $M$ yields the desired result.

We now turn to the estimate for $s=1$. The statement for the remaining case $s=0$ is actually simpler and shall be omitted here. We start with a pointwise statement. For any $x$, it holds that
\begin{align*}
[v,\phi\ast]q(x) & = \int_{\R^d}\phi(y)  (v(x)-v(x-y)) q(x-y)\, dy\\
& = \int_0^1 \int_{\R^d} \phi(y) y\cdot (\grad v)(x-sy) q(x-y)\, dyds.
\end{align*}
Averaging in $x$ and successively applying Fubini's theorem and H\"older's inequality yield
\begin{align*}
\MoveEqLeft\la | [v,\phi\ast]q|\ra  \le \int_0^1\int_{\R^d} \avint_{[0,L]^d} |\phi(y)| |\grad v (x-sy)|y||q(x-y)|\, dxdyds\\
&= \int_0^1 \int_{\R^d} |\phi(y)||y|\left(\avint_{[0,L]^d}|\grad u(x-sy)|^2\, dx\right)^{1/2}\left(\avint_{[0,L]^d} q(x-y)^2\,dx\right)^{1/2}dyds.
\end{align*}
It only remains to invoke the periodicity in $x$ to conclude the statement of the lemma.
\end{proof}

\begin{prop}\label{P1}
There exists a universal constant $C>0$ such that
\begin{equation}\label{14}
\frac{d}{dt} \la |\theta_{\ell,j}|\ra + \frac{2^{2\ell}\kappa}{C} \la |\theta_{\ell,j}|\ra \le C 2^{-s\ell} \la |\grad^s u|^2\ra^{1/2}\la |\grad \theta|^2\ra^{1/2} .
\end{equation}
\end{prop}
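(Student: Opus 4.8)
The plan is to combine the scale-by-scale energy estimate of Lemma~\ref{L1} with the commutator bound of Lemma~\ref{L2}, applied to the specific kernel $\phi_{\ell,j}$. Starting from \eqref{11}, the right-hand side is $\la|[u\cdot,\phi_{\ell,j}\ast]\grad\theta|\ra$, so I would apply Lemma~\ref{L2} with $v=u$, $q=\grad\theta$ (componentwise, or treating $\grad\theta$ as a vector and using the triangle inequality over components), and $\phi=\phi_{\ell,j}$. This gives
\[
\la|[u\cdot,\phi_{\ell,j}\ast]\grad\theta|\ra \lesssim \left(\int_{\R^d}|\phi_{\ell,j}|\,dy\right)^{1-s}\left(\int_{\R^d}|\phi_{\ell,j}||y|\,dy\right)^s \la|\grad^s u|^2\ra^{1/2}\la|\grad\theta|^2\ra^{1/2}.
\]
The core of the argument is then to estimate the two $L^1$-type norms of the kernel $\phi_{\ell,j}$ and show that the combination scales like $2^{-s\ell}$.

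The kernel bookkeeping is the substantive step. Recall $\phi_{\ell,j}=\phi_\ell\ast\psi_{\ell,j}$, and both $\phi_\ell$ and $\psi_{\ell,j}$ are obtained by the parabolic-type rescaling \eqref{6}, \eqref{16} from fixed Schwartz functions: $(\F\phi_\ell)(\xi)=(\F\phi_0)(2^{-\ell}\xi)$ forces $\phi_\ell(x)=2^{\ell d}\phi_0(2^\ell x)$, and similarly $\psi_{\ell,j}(x)=2^{\ell d}\psi_{0,j}(2^\ell x)$ (the modulation by $e^{i\xi_j\cdot x}$ coming from the ball being centered at $\xi_j$ rather than the origin does not affect $|\psi_{\ell,j}|$). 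Hence $\phi_{\ell,j}(x)=2^{\ell d}(\phi_0\ast\psi_{0,j})(2^\ell x)$ up to a harmless modulation, so by the change of variables $y=2^\ell x$,
\[
\int_{\R^d}|\phi_{\ell,j}(y)|\,dy = \int_{\R^d}|(\phi_0\ast\psi_{0,j})(z)|\,dz \lesssim 1,\qquad \int_{\R^d}|\phi_{\ell,j}(y)||y|\,dy = 2^{-\ell}\int_{\R^d}|(\phi_0\ast\psi_{0,j})(z)||z|\,dz \lesssim 2^{-\ell}.
\]
Since $\phi_0,\psi_{0,j}$ are Schwartz, $\phi_0\ast\psi_{0,j}$ is Schwartz, so both integrals on the right are finite universal constants; one should note that the number $J$ of balls in the cover of the dyadic annulus depends only on $\sigma$ and $d$, not on $\ell$, so the constants are genuinely universal once $\sigma$ is fixed as in Lemma~\ref{L1}. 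Plugging these into the displayed commutator bound gives $\la|[u\cdot,\phi_{\ell,j}\ast]\grad\theta|\ra \lesssim 1^{1-s}(2^{-\ell})^s \la|\grad^s u|^2\ra^{1/2}\la|\grad\theta|^2\ra^{1/2} = 2^{-s\ell}\la|\grad^s u|^2\ra^{1/2}\la|\grad\theta|^2\ra^{1/2}$, which combined with \eqref{11} is exactly \eqref{14}.

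The main obstacle — really the only place requiring care — is making the rescaling identity for $\phi_{\ell,j}$ precise and tracking the modulation factor $e^{i\xi_j\cdot x}$ correctly, so that one is sure the $|y|$-weighted kernel norm picks up exactly one factor of $2^{-\ell}$ and no spurious $\xi_j$-dependent growth (note $|\xi_j|\sim 2^\ell$, so a mistake here could cost a factor $2^\ell$ and destroy the estimate). The modulation is benign because Lemma~\ref{L2} only sees $|\phi|$ and $|\phi||y|$, and $|e^{i\xi_j\cdot x}|=1$; the convolution $\phi_\ell\ast\psi_{\ell,j}$ adds the scales, not the spatial weights in a way that would spoil the count, since $\int|\phi_\ell\ast\psi_{\ell,j}||y|\,dy \le \int|\phi_\ell||y|\,dy\int|\psi_{\ell,j}| + \int|\phi_\ell|\int|\psi_{\ell,j}||y| \lesssim 2^{-\ell}$. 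A secondary point is the vector nature of $\grad\theta$: one applies Lemma~\ref{L2} to each partial derivative $\partial_i\theta$ and sums, absorbing the dimensional constant $d$ into the universal constant $C$. Everything else is the routine chaining of the two lemmas.
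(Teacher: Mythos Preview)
Your proposal is correct and follows essentially the same route as the paper: combine Lemma~\ref{L1} with Lemma~\ref{L2} applied to $\phi=\phi_{\ell,j}$, and then use the scaling identity $\phi_{\ell,j}(y)=2^{d\ell}(\phi_0\ast\psi_{0,j})(2^{\ell}y)$ to obtain $\int|\phi_{\ell,j}(y)||y|^r\,dy\lesssim 2^{-r\ell}$ for $r=0,1$. One small remark: your worry about a modulation factor $e^{i\xi_j\cdot x}$ is unnecessary here, since by the paper's definition \eqref{16} the rescaling $(\F\psi_{\ell,j})(\xi)=(\F\psi_{0,j})(2^{-\ell}\xi)$ already moves the ball centers (the $\xi_j$ at level $\ell$ are $2^{\ell}$ times those at level $0$), so $\phi_{\ell,j}(y)=2^{d\ell}(\phi_0\ast\psi_{0,j})(2^{\ell}y)$ holds exactly and no separate modulation appears.
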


\begin{proof}
The statement is an immediate consequence of the previous two lemmas together with the observation that 
\begin{equation}\label{15}
\int_{\R^d} |\phi_{\ell,j}(y)||y|^r\, dy\lesssim 2^{-r\ell}
\end{equation}
for any real $r$. Our argument for \eqref{15} relies on the scaling assumptions in \eqref{6} and \eqref{16}. Indeed, the latter imply that $(\F\phi_{\ell,j})(\xi) = (\F\phi_0)(2^{-\ell}\xi)(\F\psi_{0,j})(2^{-\ell}\xi)$, so that via a change of variables,
\begin{align*}
\phi_{\ell,j}(y) & = \frac1{(2\pi)^{d/2} }\int_{\R^d} e^{i\xi\cdot y}(\F\phi_0)(2^{-\ell}\xi)(\F\psi_{0,j})(2^{-\ell}\xi)\, d\xi\\
& = \frac{2^{d\ell}}{(2\pi)^{d/2} }\int_{\R^d} e^{i\eta\cdot 2^{\ell} y}(\F\phi_0)(\eta)(\F\psi_{0,j})(\eta)\, d\eta\\
& = 2^{d\ell} (\phi_0\ast\psi_{0,j})(2^{\ell}y).
\end{align*} 
Therefore, applying a change of variables in real coordinates, we find that
\begin{align*}
\int_{\R^d} |\phi_{\ell,j}(y)||y|^r\, dy &= 2^{d\ell} \int_{\R^d} |(\phi_0\ast\psi_{0,j})(2^{\ell}y)||y|^r\, dy\\
&= 2^{- r\ell} \int_{\R^d} |(\phi_0\ast\psi_{0,j})(z)||z|^r\, dz.
\end{align*}
The integral is independent of $\ell$ and bounded by the virtue of the decay properties of Schwartz functions. This concludes the proof.
\end{proof}

We are now in the position to prove Theorem \ref{T2}.

\begin{proof}[Proof of Theorem \ref{T2}]
Our starting point is the differential inequality derived in Proposition \ref{P1} above. We smuggle the factor $e^\varphi$ into \eqref{14},
\[
\frac{d}{dt}\left(e^\varphi \la |\theta_{\ell,j}|\ra\right) + \frac{2^{2\ell}\kappa e^{\varphi}}C \la|\theta_{\ell,j}|\ra \le C2^{-s\ell}e^\varphi \la |\grad^s u|^2\ra^{1/2} \la |\grad\theta|^2\ra^{1/2}  + \frac{d\varphi}{dt} e^{\varphi} \la|\theta_{\ell,j}|\ra,
\]
and integrate in time over the interval $[0,T]$,
\begin{align*}
\MoveEqLeft e^{\varphi(T)} \la |\theta_{\ell,j}(T)|\ra + \frac{2^{2\ell} \kappa}C \int_0^T e^\varphi \la |\theta_{\ell,j}|\ra\,dt\\
& \le C2^{-s\ell}\la |\grad^s u|^2\ra^{1/2}\int_0^T e^\varphi \la |\grad \theta |^2\ra^{1/2}\, dt +\int_0^T \frac{d\varphi}{dt} e^{\varphi}\la |\theta_{\ell,j}|\ra\, dt + e^{\varphi(0)}\la |\theta_{\ell,j}(0)|\ra.
\end{align*}
Recall that we have chosen $u$ with a fixed  budget, so that $\la |\grad^s u|^2\ra$ is independent of time. Dropping the nonnegative first term on the left-hand side, passing to the long-time average and dividing by $2^{2\ell}$, we furthermore obtain
\begin{equation}\label{17}
\kappa \lla |\theta_{\ell,j}|\rra_{\varphi} \lesssim 2^{-(s+2)\ell}\la |\grad^s u|^2\ra^{1/2}  \lla |\grad\theta|^2\ra^{1/2}\ra_{\varphi} +2^{-2\ell}\|\frac{d\varphi}{dt}\|_{\infty}\lla |\theta_{\ell,j}|\rra_{\varphi}.
\end{equation}

Observe now that
\begin{equation}\label{18}
\la |\theta_{\ell,j}|\ra \lesssim 2^{-\ell} \la |\grad \theta_{\ell,j}|\ra,
\end{equation}
and
\begin{equation}\label{19}
\la|\grad\theta_{\ell,j}|\ra \lesssim \la |\grad \theta|^2\ra^{1/2}.
\end{equation}

The second estimate simply follows from Young's convolution estimate and Jensen's inequality,
\[
\la |\grad\theta_{\ell,j}|\ra \le \left(\int_{\R^d} |\phi_{\ell,j}|\, dy\right) \la |\grad \theta|\ra \lesssim \la |\grad\theta|\ra \le \la |\grad \theta|^2\ra^{1/2},
\]
where, as in the proof of Proposition \ref{P1}, 
\[
\int_{\R^d} |\phi_{\ell,j}|\, dy = \int_{\R^d} |\phi_{0,j}|\,dz \sim 1
\]
by \eqref{6} and because $\phi_{0,j}$ is a Schwartz function.

For the first estimate, \eqref{18}, we notice that in view of the scaling property \eqref{6}, it is enough to establish the statement of $\ell=0$. Due to the dyadic partition of unity of the frequency space in \eqref{5}--\eqref{7}, it holds that $\F \phi_{-1} + \F\phi_0 + \F\phi_1=1$ in the support of $\F\phi_0$. As a consequence, $\phi_{-1}+\phi_0+\phi_1$ leaves $\phi_{0,j}$ invariant under convolution. Therefore, for any $k\in \{1,\dots.d\}$,
\[
i\xi_k (\F \phi_{0,j})(\xi) = (\F\partial_{x_k} \phi_{0,j})(\xi) = \sum_{\ell=-1,0,1} (\F\phi_{\ell})(\xi) (\F\partial_{x_k}\phi_{0,j})(\xi),
\]
and thus
\[
(\F \phi_{0,j})(\xi) = \sum_{\ell=-1,0,1}\frac{\xi_k}{i|\xi_k|^2} (\F\phi_{\ell})(\xi) (\F\partial_{x_k}\phi_{0,j})(\xi).
\]
Recall that $(\F\phi_{\ell})(0)=0$ by the virtue of \eqref{5}, \eqref{6}. We now invoke Jensen's convolution estimate and find
\begin{align*}
\la |\phi_{0,j}|\ra & \le \la |\partial_k\phi_{0,j}|\ra \sum_{\ell=-1,0,1} \int_{\R^d} |\F^{-1}\left(\xi \mapsto \frac{\xi_k}{i|\xi_k|^2}(\F \phi_{\ell})(\xi) \right)|\, dx \\
&\lesssim \la |\partial_k\phi_{0,j}|\ra,
\end{align*}
where, in the second inequality, we have again used the fact that $\phi_{\ell}$ is a Schwartz function. 
\end{proof}

\section*{Acknowledgement}
The author acknowledges inspiring discussions with Charlie Doering on the Batchelor scale. He thanks the anonymous referees for helpful comments and suggestions.

\bibliography{mixing}
\bibliographystyle{acm}

\end{document}